\theoremstyle{plain}
\newtheorem{thm}{Theorem}[section]
\newtheorem{prop}[thm]{Proposition}
\newtheorem*{prop*}{Proposition}
\newtheorem{lem}[thm]{Lemma}
\newtheorem*{lem*}{Lemma}
\newtheorem*{cor*}{Corollary}
\newtheorem*{example*}{Example}
\newtheorem*{conject*}{Conjecture}
\theoremstyle{definition}
\newtheorem*{defn*}{Definition}
\newtheorem{rem}[thm]{Remark}
\newtheorem*{rem*}{Remark}
\newcommand{\eps}{\varepsilon}
\DeclareMathOperator{\ind}{ind}
\newcommand{\eop}{\qed}
\newcommand{\eop}{}
\newcommand{\abs}[1]{\vert #1 \vert}
\newcommand{\C}{\ensuremath{\mathbb{C}}}
\newcommand{\defby}{\mathrel{\mathop:}=}
\newcommand{\bydef}{=\mathrel{\mathop:}}
\newcommand{\coloneq}{\mathrel{\mathop:}=}
\newcommand{\conj}[1]{\overline{#1}}
\title{A Note on the Maximum Number of Zeros of $r(z) - \conj{z}$}
\author{Robert Luce, Olivier S\`{e}te, J\"org Liesen\footnote{TU
Berlin, Institute of Mathematics, MA 4-5, Stra{\ss}e des 17. Juni 136,
10623 Berlin, Germany (\texttt{\{luce,sete,liesen\}@math.tu-berlin.de})}}
\begin{document}
\maketitle

\begin{abstract}
An important theorem of Khavinson \& Neumann (Proc. Amer. Math. Soc.
134(4), 2006) states that the complex harmonic function $r(z) -
\conj{z}$, where $r$ is a rational function of degree $n \geq 2$, has
at most $5 (n - 1)$ zeros.  In this note we resolve a slight
inaccuracy in their proof and in addition we show that for certain
functions of the form $r(z) - \conj{z}$ no more than $5 (n - 1) - 1$
zeros can occur.  Moreover, we show that $r(z) - \conj{z}$ is regular,
if it has the maximal number of zeros.

\end{abstract}

\section{Introduction}

Let $r = \frac{p}{q}$ be a complex rational function of degree 
\begin{equation*}
    n = \deg(r) \defby \max \{ \deg(p), \deg(q) \}.
\end{equation*}
Here and in the sequel the polynomials $p$ and $q$ are always assumed to 
be coprime.  We then say that the rational harmonic function
\begin{equation}
\label{eqn:rat_harm}
    f(z) \defby r(z) - \conj{z}
\end{equation}
is of degree $n$, too.  Such functions have an interesting application
in \emph{gravitional microlensing}; see the introductory overview
article of Khavinson \& Neumann~\cite{KhavinsonNeumann2008}.  They
also play a role in the matrix theory problem of expressing certain
adjoints of diagonalizable matrices as rational functions of the
matrix~\cite{Liesen2007}.

An important theorem of Khavinson \& Neumann~\cite[Theorem~1]{KhavinsonNeumann2006}
states that a rational harmonic function~\eqref{eqn:rat_harm} of degree $n \ge 2$
has at most $5(n-1)$ zeros. In this note we give an alternative proof 
of their result (the differences to the original proof are discussed
in Remark~\ref{rem:difference}). Moreover,
we show that a slightly better bound can be given if one takes into account 
the individual degrees of the numerator and denominator polynomials.  
In order to state our main result, 
we recall that a zero $z_0$ of $f$ is called \emph{sense-preserving}
if $\abs{r'(z_0)} > 1$, \emph{sense-reversing} if $\abs{r'(z_0)} < 1$,
and \emph{singular} if $\abs{r'(z_0)} = 1$;
see~\cite{SeteLuceLiesen2014}.

\begin{thm}
\label{thm:new_bound}
A rational harmonic function $f(z) = r(z)- \conj{z}$ of degree $n \ge 2$ has at 
most $3(n-1)$ sense-preserving zeros, and at most $2(n-1)$ sense-reversing or 
singular zeros.
Moreover, if $r = \tfrac{p}{q}$ with $\deg(p) > \deg(q)$, then $f$ 
has at most $5(n-1)-1$ zeros.
\end{thm}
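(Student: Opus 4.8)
The plan is to prove Theorem~\ref{thm:new_bound} in two stages. First I would establish the refined bounds on the numbers of sense-preserving and sense-reversing/singular zeros, and only then deduce the final $5(n-1)-1$ estimate under the hypothesis $\deg(p) > \deg(q)$.

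\textbf{Step 1: Counting by sense via the argument principle for harmonic functions.}
The natural tool is the generalized argument principle for harmonic mappings (as used by Khavinson \& Neumann), which asserts that the sum of the (signed) indices of the zeros of $f$ equals the change in argument of $f$ around a large contour. For a rational harmonic function $r(z)-\conj z$ the index of a sense-preserving zero is $+1$ and the index of a sense-reversing zero is $-1$; singular zeros must be handled separately, e.g.\ by a small generic perturbation $r(z)+\eps - \conj z$ that makes all zeros nondegenerate while not increasing the total count, so it suffices to bound nonsingular zeros. Writing $N_+$ for the number of sense-preserving and $N_-$ for the number of sense-reversing zeros, the argument principle gives $N_+ - N_- = $ (a winding number computable from the degree of $r$). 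The second ingredient is an upper bound on $N_+$ coming from counting fixed points / solutions of an auxiliary \emph{analytic} equation: the sense-preserving zeros are controlled by the critical structure of $r$, and a Bezout-type or Wronskian argument yields $N_+ \le 3(n-1)$. Combining this with the index identity forces $N_- \le 2(n-1)$, which is exactly the asserted split; summing recovers the $5(n-1)$ bound of Khavinson \& Neumann.

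\textbf{Step 2: The improvement when $\deg(p) > \deg(q)$.}
Here the key observation is a behavior-at-infinity argument. When $\deg(p) > \deg(q)$ we have $r(z) \to \infty$ as $z\to\infty$, so $f(z) = r(z) - \conj z$ also tends to infinity and, more importantly, the leading term of $r$ dominates $\conj z$; this pins down the winding number of $f$ on a large circle precisely. I would compute the net index (the change of argument of $f$ around a large contour) in this regime and show it equals $n-1$ rather than the generic value, so that the index identity becomes $N_+ - N_- = n-1$. If one could simultaneously attain $N_+ = 3(n-1)$ and $N_- = 2(n-1)$ — which is what the total bound $5(n-1)$ requires — then $N_+ - N_- = n-1$ is automatically consistent, so the identity alone does not yet give the strict improvement. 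The extra saving must instead come from showing that when $\deg(p)>\deg(q)$ at least one potential sense-preserving zero is \emph{forced to be absent}: concretely, I expect the maximal configuration $N_+ = 3(n-1)$ to require that $r$ attains the value $\conj z$ with $|r'|>1$ at a full complement of points, but the growth condition at infinity eliminates one of these, so that $N_+ \le 3(n-1)-1$, whence $N_+ + N_- \le 5(n-1)-1$.

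\textbf{Main obstacle.}
The hard part will be Step~2, and specifically turning the ``generic value of the winding number'' into a genuine deficit of one zero rather than merely a parity or consistency condition. The index identity is symmetric enough that it cannot by itself decrease the \emph{sum} $N_+ + N_-$; the improvement must come from a rigidity statement about the boundary/asymptotic behavior (when $\deg p > \deg q$, $f$ is ``injective at infinity'' in a way that costs one solution), and making that precise — rather than hand-waving about the leading term — is where the real work lies. I would likely argue by contradiction: assume $f$ has the full $5(n-1)$ zeros, invoke the regularity conclusion of the theorem (that a function attaining the maximum is regular, so all its zeros are nonsingular and the index count is exact), and then derive a contradiction with the precise winding number $n-1$ computed under $\deg(p) > \deg(q)$. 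A secondary technical point is justifying that singular zeros can be removed by perturbation without loss, so that the sharp index bookkeeping applies; this should follow from a standard stability argument but must be stated carefully.
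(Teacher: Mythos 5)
There are genuine gaps in both steps of your plan. In Step~1 the logic runs in the wrong direction: the bound that is actually available by a Bezout/fixed-point argument (Khavinson \& Neumann's Proposition, proved by viewing zeros of $f$ as fixed points of the degree-$n^2$ rational function $\conj{r(\conj{r(z)})}$ and applying Fatou's theorem to the \emph{attracting} ones) is the bound $N_- + N_0 \le 2(n-1)$ on sense-reversing and singular zeros; there is no direct ``Bezout-type or Wronskian'' bound $N_+ \le 3(n-1)$ on the sense-preserving zeros, which correspond to \emph{repelling} fixed points. The paper therefore takes $N_{-}+N_0 \le 2(n-1)$ as the input and obtains $N_+ \le 3(n-1)$ as the output of the index identity $N_+ - N_- = n-1$, not the other way around. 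Note also that this identity only holds when $\deg(p) \le \deg(q)$ and all zeros are regular, so one needs the density lemma plus a root-persistence (Rouch\'e) argument to remove singular zeros; your perturbation remark points the right way but the relevant direction is that $N_+(f) \le N_+(f-c)$, i.e.\ sense-preserving zeros persist, not that the total count does not increase.

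Step~2 is where the proposal actually fails. When $\deg(p) > \deg(q)$ the winding of $f$ on a large circle equals $V(r;\gamma) = \deg(p)-\deg(q)$ (and is delicate when $\deg(p)-\deg(q)=1$), so the index identity becomes $N_+ - N_- = n$, not $n-1$ as you claim; combined with $N_- \le 2(n-1)$ this yields only $N_+ \le 3n-2 = 3(n-1)+1$, i.e.\ the direct approach at infinity does not even recover the $3(n-1)$ bound in this case, let alone the deficit of one. You correctly sense that a separate mechanism is needed to ``lose'' one zero, but you leave it as a hope; the paper's mechanism is concrete: conjugate by a M\"obius map $w = T(z) = \tfrac{1}{z-b}$ (with $r(b)\neq\conj{b}$) to form $R = \conj{T}\circ r\circ T^{-1}$, observe that $F(w)=R(w)-\conj{w}$ again has denominator degree $\ge$ numerator degree so the already-proved case applies, that zeros of $f$ correspond bijectively and sense-preservingly to zeros of $F$ away from $w=0$, and that $F(0)=0$ is an extra zero with no preimage among the zeros of $f$ — hence $f$ has at most $5(n-1)-1$ zeros. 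Finally, your fallback of invoking ``the regularity conclusion of the theorem'' (that extremal $f$ is regular) would be circular: in the paper that statement is proved in Section~\ref{sec:extremal} \emph{using} Theorem~\ref{thm:new_bound}.
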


The first part of this theorem was already stated in~\cite[Theorem~1
and Proposition~1]{KhavinsonNeumann2006} (see
also~\cite[Appendix~B]{AnEvans2006}, where several extensions to this
bound are presented). Our proof in the next section employs similar
techniques as the one in~\cite{KhavinsonNeumann2006}, but it avoids a
subtle inaccuracy in the argument, which we will explain next.

If $f(z) = r(z) - \conj{z}$ has no singular zero, then $f$ as well as
$r$ are called \emph{regular}.  In the proof of the Main Lemma
in~\cite{KhavinsonNeumann2006}, part (2), it is implicitly assumed
that if $f(z) = r(z) - \conj{z}$ is regular, then the function
\begin{equation*}
    F(w) = \tfrac{1}{r(\frac{1}{w})} - \conj{w}
\end{equation*}
is regular as well.  However, this implication is in general not correct.
For example, consider the rational harmonic function $f(z) = z + \tfrac{1}{z} - 
\conj{z}$.  Clearly, $0$ is not a zero of $f$, so that we have
\begin{equation*}
    f(z) = 0 \; \Leftrightarrow \; z^2 + 1 = \abs{z}^2,
\end{equation*}
and hence $f$ has (only) the two zeros $\pm \tfrac{i}{\sqrt{2}}$.
Since $\abs{r'(\pm \tfrac{i}{\sqrt{2}})} = 3 > 1$, the function $f$ is regular. 
Now consider
\begin{equation}
\label{eqn:defR}
    F(w)
    = \tfrac{1}{r(\frac{1}{w})} - \conj{w}
    = \tfrac{ w }{ 1 + w^2 } - \conj{w}
    \bydef R(w) - \conj{w}.
\end{equation}
Then $F(0) = 0$, and $\abs{R'(0)} = 1$ shows that $0$ is a singular zero of $F$.

In Section~\ref{sec:new_proof} we give a new proof of 
Theorem~\ref{thm:new_bound}.  In Section~\ref{sec:extremal} we further show 
that $r(z)-\conj{z}$ has no singular zeros, if it has the maximal number of 
zeros as stated in Theorem~\ref{thm:new_bound}.

\section{Proof of Theorem~\ref{thm:new_bound}}
\label{sec:new_proof}

In order to prove Theorem~\ref{thm:new_bound} we need some preliminary
results.  First note that the function $R$ defined in~\eqref{eqn:defR}
can be written as
\begin{equation}
\label{eqn:co-conj}
R(w)=\conj{T}\circ r \circ T^{-1}(w), 
\end{equation}
where $w = T(z) = \tfrac{1}{z}$ is a M\"obius transformation. More
generally, we say that for a rational function $r(z)$ and any given
M\"obius transformation $T(z)$,  a function $R(w)$ of
the form~\eqref{eqn:co-conj} is a \emph{co-conjugate} of $r(z)$.  Here
$\conj{T}(z)$ denotes the M\"obius transformation obtained from $T(z)$
by conjugating all the coefficients.  Co-conjugates maintain the
number and sense of zeros of $r(z) - \conj{z}$, as we show next.

\begin{prop}
\label{prop:co-conjugate}
Let $r(z)$ be rational and of degree $n \ge 1$, and let $T(z) =
\frac{az + b}{cz + d}$ be a M\"{o}bius transformation.  Then $R(w) =
\conj{T} \circ r \circ T^{-1}(w)$ is a rational function of degree $n$
and we have:
\begin{compactenum}
\item $r(z) = \conj{z}$ if and only if $R(w) = \conj{w}$, for
all $z \in \C$ with $w = T(z) \neq \infty$.  In that case, if $r(z) =
\conj{z}$, we have $\abs{r'(z)} = \abs{R'(w)}$.

\item Writing
$r = \tfrac{p}{q}$ with $p(z) = \sum_{k=0}^n p_k z^k$ and 
$q(z) = \sum_{k=0}^n q_k z^k$, $R$ has the representation
\begin{equation}
\label{eqn:co-conjugate}
    R(w) = \frac{ \sum_{k=0}^n ( \conj{a} p_k + \conj{b} q_k)
    (dw-b)^k (a-cw)^{n-k} }{ \sum_{k=0}^n (\conj{c} p_k + \overline{d} q_k )
    (dw-b)^k (a-cw)^{n-k} }.
\end{equation}
\end{compactenum}
\end{prop}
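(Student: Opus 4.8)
The plan is to treat the two parts separately: part (1) rests on a single algebraic identity relating $\conj{T}$ to complex conjugation, while part (2) is a direct substitution followed by clearing denominators. The degree assertion I would deduce from the multiplicativity of degree under composition of rational self-maps of $\Ch$, since $\conj{T}$ and $T^{-1}$ have degree one.

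For part (1) I would first record the intertwining identity $\conj{T}(\conj{\zeta}) = \conj{T(\zeta)}$, which follows immediately from $\conj{T}(\zeta) = \frac{\conj{a}\zeta + \conj{b}}{\conj{c}\zeta + \conj{d}}$ by conjugating $T(\zeta)$. Setting $w = T(z)$, so that $z = T^{-1}(w)$ and $R(w) = \conj{T}(r(z))$, I would rewrite the target equation $R(w) = \conj{w}$ as $\conj{T}(r(z)) = \conj{T(z)} = \conj{T}(\conj{z})$, where the last step uses the intertwining identity. Since $\conj{T}$ is a M\"obius transformation, hence injective, this is equivalent to $r(z) = \conj{z}$, giving the stated equivalence.

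For the derivative equality I would differentiate $R = \conj{T}\circ r\circ T^{-1}$ by the chain rule, all three factors being holomorphic, obtaining $R'(w) = \conj{T}'(r(z))\, r'(z)\, (T^{-1})'(w)$ with $z = T^{-1}(w)$. At a zero, $r(z) = \conj{z}$, so I would use $\conj{T}'(\conj{z}) = \conj{T'(z)}$, which follows from $T'(z) = \frac{ad-bc}{(cz+d)^2}$, together with $(T^{-1})'(w) = 1/T'(z)$, to get $R'(w) = \conj{T'(z)}\, r'(z)/T'(z)$. Taking absolute values and cancelling $\abs{T'(z)} = \abs{\conj{T'(z)}}$ yields $\abs{R'(w)} = \abs{r'(z)}$.

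For part (2) I would compute $T^{-1}(w) = \frac{dw-b}{a-cw}$, substitute $z = T^{-1}(w)$ into $r = p/q$, and clear the common factor $(a-cw)^n$ from numerator and denominator to obtain $r(T^{-1}(w)) = P(w)/Q(w)$ with $P(w) = \sum_{k=0}^n p_k (dw-b)^k(a-cw)^{n-k}$ and $Q$ the analogous sum with the $q_k$. Applying $\conj{T}(\zeta) = \frac{\conj{a}\zeta + \conj{b}}{\conj{c}\zeta + \conj{d}}$ to $\zeta = P/Q$ and simplifying gives $R = \frac{\conj{a} P + \conj{b} Q}{\conj{c} P + \conj{d} Q}$, which is exactly the claimed formula. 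Finally, since $\conj{T}$ and $T^{-1}$ have degree one and degree is multiplicative under composition, $\deg R = \deg r = n$. I expect the main, if modest, obstacle to be precisely this degree bookkeeping: the explicit numerator and denominator in the formula may share common factors, so exactness of the degree must be argued by the composition count rather than read off directly from the displayed representation.
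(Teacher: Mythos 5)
Your proposal is correct and follows essentially the same route as the paper: the equivalence in part (1) via the identity $\conj{T}(\conj{\zeta}) = \conj{T(\zeta)}$ and injectivity of $\conj{T}$, the derivative relation via the chain rule and $(T^{-1})'(w) = 1/T'(z)$, the formula in part (2) by direct substitution of $T^{-1}(w) = \tfrac{dw-b}{a-cw}$ and clearing $(a-cw)^n$, and the degree claim from multiplicativity of degree under composition. Your closing caveat that the degree must be read from the composition rather than from the displayed representation (which may carry common factors) is a sensible point that the paper handles the same way.
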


\begin{proof}
The degree of $R$ can be seen from the degree formula
$\deg(r \circ s) = \deg(r) \deg(s)$ for non-constant rational functions;
see~\cite[p.~32]{Beardon1991}.
The first claim can be seen from the computations
\begin{equation*}
    r(z) = \conj{z}
    \Leftrightarrow (\conj{T} \circ r)(z) = \conj{T}( \conj{z}) = \conj{T(z)}
    \Leftrightarrow R(w) = \conj{w},
\end{equation*}
and
\begin{equation*}
    R'(w)
    = \conj{T}' (r(z)) r'(z) (T^{-1})' (w)
    = \conj{T}' (\conj{z}) r'(z) \tfrac{1}{T'(z)}
    = \tfrac{\conj{T'(z)}}{T'(z)} r'(z).
\end{equation*}
For the second claim, note that $T^{-1}(w) = \tfrac{dw-b}{a-cw}$, so that
we have
\begin{equation*}
    r(T^{-1}(w))
    = \frac{ \sum_{k=0}^n p_k (dw-b)^k (a-cw)^{n-k} }{ \sum_{k=0}^n
        q_k (dw-b)^k (a-cw)^{n-k} },
\end{equation*}
from which we see that $R(w) = \conj{T}(r(T^{-1}(w)))$ has the
form~\eqref{eqn:co-conjugate}.
\eop
\end{proof}

In our proof of Theorem~\ref{thm:new_bound} we also need the winding
of a complex function along a curve, and indices of zeros and poles of
harmonic functions (sometimes called order, or multiplicity).  Here we
only give the most relevant definitions. A
compact summary of these concepts is given in~\cite[Section
2]{SeteLuceLiesen2014}, see also~\cite{Balk1991} and
\cite[p.~29]{Sheil-Small2002} (where the winding is called ``degree'').

Let $\Gamma$ be a rectifiable curve with
parametrization $\gamma : [a, b] \to \Gamma$.  Let $f : \Gamma \to \C$
be a continuous function with no zeros on $\Gamma$.  Let $\arg f(z)$
denote a continuous branch of the argument of $f$ on $\Gamma$.  The
\emph{winding} (or \emph{rotation}) \emph{of $f(z)$ on the curve
$\Gamma$} is defined as
\begin{equation*}
V(f; \Gamma)
\coloneq \tfrac{1}{2 \pi} ( \arg f(\gamma(b)) - \arg f(\gamma(a)) )
\end{equation*}
The winding is independent of the choice of the branch of $\arg f(z)$.
Let $z_0$ be a zero or pole of $f(z) = r(z) - \conj{z}$. Denote by
$\Gamma$ a circle around $z_0$ not containing any further zeros or
poles of $f$.  Then the \emph{Poincar\'e index} of $f$ at $z_0$ is
defined as
\begin{equation*}
    \ind(z_0; f) \defby V(f; \Gamma).
\end{equation*}
The Poincar\'e index is independent of the choice of $\Gamma$.

Moreover, we will use the following results
in our proof.

\begin{prop}[{\cite[Proposition~2.7]{SeteLuceLiesen2014}}]
\label{prop:regular_indices}
Let $f(z) = r(z) - \conj{z}$ be a rational harmonic function with $\deg(r)
\geq 2$.  The indices of $f$ at $z_0$ can be summarized as follows:
\begin{compactenum}
\item If $z_0$ is a sense-preserving zero of $f$, then $\ind(z_0;f) = 1$.
\item If $z_0$ is a sense-reversing zero of $f$, then $\ind(z_0;f) = -1$.
\item If $z_0$ is a pole of $r$ of order $m$, then $\ind(z_0;f) = -m$.
\end{compactenum}
\end{prop}

\begin{prop}[{\cite[Proposition~1]{KhavinsonNeumann2006}}]
\label{prop:bound_sr_neutral_zeros}
A rational harmonic function $f(z) = r(z) - \conj{z}$ of degree $n \ge 2$ 
has at most $2(n-1)$ sense-reversing or singular zeros. 
\end{prop}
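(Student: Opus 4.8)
The plan is to reformulate the sense-reversing zeros dynamically and bound them by the critical points of $r$. First I would reduce to the \emph{regular} case. A singular zero satisfies $\abs{r'(z_0)} = 1$; by perturbing $r$ slightly (for instance by replacing $r$ with $\lambda r$ and letting $\lambda \to 1$, and normalizing $\infty$ with a co-conjugate from Proposition~\ref{prop:co-conjugate}) one obtains a nearby regular function $f_\eps$ whose sense-reversing zeros account, via Hurwitz's theorem in the limit, for every sense-reversing \emph{and} singular zero of $f$. It therefore suffices to bound the number $n_-$ of sense-reversing zeros when $f$ is regular.

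The key observation is that a zero $z_0$ of $f$ satisfies $r(z_0) = \conj{z_0}$, hence $\conj{r(z_0)} = z_0$, so that $z_0$ is a fixed point of the anti-holomorphic map $\sigma(z) \defby \conj{r(z)}$ and of its holomorphic second iterate $G \defby \conj{r} \circ r$, where $\conj{r}$ is the rational function obtained by conjugating the coefficients of $r$. A short computation (as in the proof of Proposition~\ref{prop:co-conjugate}) gives $G'(z_0) = (\conj{r})'(\conj{z_0})\, r'(z_0) = \conj{r'(z_0)}\, r'(z_0) = \abs{r'(z_0)}^2$, so a sense-reversing zero of $f$ is precisely an \emph{attracting} fixed point of $G$. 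I would then invoke the classical fact that the immediate basin of attraction of each attracting fixed point of $G$ contains a critical point of $G$, and show that in fact it contains a zero of $r'$, i.e.\ a critical point of $r$ itself.

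This last conversion is the crux and uses the symmetry $\sigma^2 = G$. Since $\sigma(z_0) = z_0$, the anti-holomorphic map $\sigma$ maps the immediate basin $B$ of $z_0$ onto itself. If $w \in B$ is a critical point of $G$ with $r'(w) \neq 0$, then necessarily $(\conj{r})'(r(w)) = 0$, whence $\sigma(w) = \conj{r(w)}$ is a zero of $r'$ that again lies in $B$. Thus every immediate basin contains a zero of $r'$, and since distinct attracting fixed points have disjoint immediate basins, the assignment of such a zero is injective. Consequently $n_-$ is at most the number of zeros of $r'$ in $\C$, and since $r$ has degree $n$ it has at most $2(n-1)$ critical points by the Riemann--Hurwitz formula, giving $n_- \le 2(n-1)$.

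The main obstacle is exactly the conversion step in the previous paragraph together with the bookkeeping at $\infty$: one must ensure that the critical points being counted are genuinely finite, distinct from the poles of $r$, and not the spurious extra critical points of the degree-$n^2$ map $G$, which is where the normalization afforded by Proposition~\ref{prop:co-conjugate} is convenient. One must also verify that the perturbation removing the singular zeros does not undercount them, again via Hurwitz. By contrast, the basin/critical-point correspondence, the disjointness of immediate basins, and the identity $G'(z_0) = \abs{r'(z_0)}^2$ are either standard or routine, so the essential content lies in the symmetry argument that pins the relevant critical point down to a zero of $r'$ inside the same basin.
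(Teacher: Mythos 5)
The paper does not prove this proposition at all: it is imported verbatim from Khavinson \& Neumann as a known ingredient, so there is no internal proof to compare against. Your dynamical argument for the sense-reversing zeros is, in substance, exactly the argument of the cited reference (going back to Khavinson--\'Swi\k{a}tek): a sense-reversing zero is an attracting fixed point of $G = \conj{r}\circ r$ because $G'(z_0)=\abs{r'(z_0)}^2$, Fatou's theorem places a critical point of $G$ in each immediate basin, and the $\sigma$-symmetry of the basin converts that critical point into a zero of $r'$, of which there are at most $2(n-1)$. That part is sound.

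The genuine gap is your reduction of the singular zeros to the regular case by perturbation. ``Hurwitz's theorem'' does not apply to the harmonic function $f$, and singular zeros simply do not persist under small perturbations in the way you need: they can vanish outright or resurface only as sense-preserving zeros. Concretely, for $r(z)=z+z^2$ the function $f(z)=r(z)-\conj{z}$ has a singular zero at $z_0=0$ (since $r'(0)=1$), and for real $c<0$ the perturbed function $f-c$ has \emph{no} zero whatsoever near $0$ (its zero set near the origin is $\{y=0,\ x^2=c\}=\emptyset$); so no choice of ``accounting via a nearby sense-reversing zero'' is available for an arbitrary perturbation, and you give no argument that a favorable perturbation direction always exists. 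Worse, a persistence argument presupposes that the singular zeros are isolated and finite in number, which is not automatic for harmonic functions and is part of what the proposition asserts. The standard repair --- and what Khavinson \& Neumann actually do --- is to treat singular zeros inside the same dynamical framework: a singular zero is a fixed point of $G$ with multiplier $G'(z_0)=\abs{r'(z_0)}^2=1$, i.e.\ a parabolic fixed point with multiplier exactly $+1$, and the parabolic counterpart of Fatou's theorem (every petal cycle attracts a critical point) supplies a critical point of $G$ in an invariant petal, after which your symmetry argument goes through unchanged. This also yields finiteness for free, since all zeros of $f$ are among the $n^2+1$ fixed points of the degree-$n^2$ rational map $G$. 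With that replacement your proof is correct; as written, the perturbation step does not hold up.
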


\begin{lem}[{\cite[Lemma]{KhavinsonNeumann2006}}]
\label{lem:r-c_regular}
If $r$ is rational and of degree at least $2$, then the set of complex 
numbers $c$ for which $r-c$ is regular, is open and dense in $\C$.
\end{lem}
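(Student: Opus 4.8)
The plan is to recast the lemma as a statement about the regular values of a single smooth map and then apply Sard's theorem together with a compactness argument. Write $f(z) = r(z) - \conj z$ and regard it as a map from $\C$ (minus the finitely many poles of $r$) to $\R^2$. A short computation with the Cauchy--Riemann equations shows that its real Jacobian determinant equals $\abs{r'(z)}^2 - 1$, so the critical points of $f$ are exactly the points with $\abs{r'(z)} = 1$. Since the harmonic function attached to $r - c$ is $(r-c)(z) - \conj z = f(z) - c$, a point $z_0$ is a singular zero of $r-c$ precisely when $z_0 \in f^{-1}(c)$ and $z_0$ is a critical point of $f$. Hence $r - c$ is regular if and only if $c$ is a \emph{regular value} of $f$, and the lemma becomes the assertion that the regular values of $f$ form an open and dense subset of $\C$.

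For density I would invoke Sard's theorem: $f$ is real-analytic on the complement of its pole set, a smooth $2$-manifold, so its set of critical values has two-dimensional Lebesgue measure zero, and in particular its complement is dense. Alternatively, the critical set $\{\abs{r'}=1\}$ is the zero set of the nonconstant real-analytic function $\abs{r'}^2 - 1$, hence has Hausdorff dimension at most one, so its image under the locally Lipschitz map $f$ is a null set.

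The substantive point is openness, which I would derive from compactness of the critical set $L \defby \{\, z : \abs{r'(z)} = 1 \,\}$: if $L$ is compact then the set of critical values $f(L)$ is compact, hence closed, and its complement is open. Now $L$ is closed and stays away from the poles of $r$, where $\abs{r'} \to \infty$, so it remains only to bound $L$ near $\infty$. Writing $r = \tfrac pq$, one inspects the behaviour of $r'$ at infinity: if $\deg p \le \deg q$ then $r'(z) \to 0$, and if $\deg p \ge \deg q + 2$ then $\abs{r'(z)} \to \infty$; in either case $\abs{r'} \neq 1$ for large $\abs z$, so $L$ is bounded and the argument goes through. In the remaining case $\deg p = \deg q + 1$ one has $r'(z) \to a$, where $a$ is the quotient of the leading coefficients of $p$ and $q$, and $L$ is still bounded as soon as $\abs a \neq 1$.

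The main obstacle is the borderline case $\deg p = \deg q + 1$ with $\abs a = 1$. Here $\abs{r'(z)} \to 1$, the critical set $L$ may have unbounded branches running to $\infty$, and $f(z) = (az - \conj z) + O(1)$ need not tend to $\infty$ along them, because the $\R$-linear map $z \mapsto az - \conj z$ is singular when $\abs a = 1$; this is exactly the mechanism behind the introductory example $z + \tfrac 1z - \conj z$. In this case establishing closedness of $f(L)$ requires a direct asymptotic analysis of $f$ on the ends of $L$: a sequence of critical points $z_n \to \infty$ with $f(z_n) \to c_0$ would have to approach simultaneously the asymptotic directions of $L$ (governed by the next Laurent coefficient of $r$ at $\infty$) and the kernel direction of $z \mapsto az - \conj z$, and one must show that these two constraints are either incompatible---so that no escaping sequence exists and $f(L)$ is closed---or else force $c_0$ itself into the critical value set. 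One might hope to sidestep this by moving $\infty$ to a finite point via the substitution $w = 1/z$ and passing to the co-conjugate of Proposition~\ref{prop:co-conjugate}, under which the escaping critical points collapse to a critical point at $w = 0$; however, since the additive family $r - c$ does not transform into an additive family of co-conjugates, this controls only the single value $c=0$, and I expect the asymptotic analysis at infinity to be the genuine difficulty.
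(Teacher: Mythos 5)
First, a point of reference: the paper itself gives no proof of this lemma --- it is quoted from Khavinson and Neumann --- so your attempt can only be judged on its own merits, not against an argument in this note. Your reduction is correct: with $f(z) = r(z) - \conj{z}$ and $L \defby \{ z : \abs{r'(z)} = 1 \}$, the real Jacobian of $f$ is $\abs{r'}^2 - 1$, and $r-c$ fails to be regular exactly when $c \in f(L)$, so the lemma asserts that $f(L)$ is Lebesgue-null (density) and closed (openness). Your density argument is complete and sound: $f$ is smooth away from the finitely many poles, $L$ stays away from them since $\abs{r'} \to \infty$ there, and either Sard's theorem or the one-dimensionality of the real-analytic set $L$ gives that $f(L)$ has measure zero. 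Density is in fact all this paper ever uses: Step~2 of the proof of Theorem~\ref{thm:new_bound} only needs a sequence $c_k \to 0$ with $r - c_k$ regular.

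The gap you flag in the openness half is genuine, and worse, it cannot be closed by any asymptotic analysis: in the borderline case $\deg p = \deg q + 1$ with unimodular leading ratio, the set $f(L)$ need not be closed at all. Take $r(z) = z + \tfrac{i}{z}$ (the introductory example with $1$ replaced by $i$). Then $\abs{r'(z)}^2 = 1 + 2 \im(z^{-2}) + \abs{z}^{-4}$, and writing $z = x + iy$ the condition $\abs{r'(z)} = 1$ reduces exactly to $4xy = 1$, so $L$ is the hyperbola $xy = \tfrac14$. Along the branch $z = x + \tfrac{i}{4x}$, $x \to +\infty$, one computes $f(z) = 2iy + \tfrac{i}{z} = \tfrac{3i}{2x} + O(x^{-3}) \to 0$, so $0$ is a limit of critical values; on the other hand $f(z) = 0$ forces $\tfrac1z = -2\im z$ to be real, hence $\im z = 0$, hence $\tfrac1z = 0$, which is impossible, so $f$ has no zeros at all and $r - 0$ is regular. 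Thus $0$ belongs to the set of regular $c$ but every neighbourhood of $0$ contains non-regular values $f(x + \tfrac{i}{4x})$: the two asymptotic constraints you describe (the ends of $L$ and the kernel of $z \mapsto az - \conj{z}$) are compatible here and do not force the limit into $f(L)$. The upshot is that only the density assertion is provable in general --- your proof of that half is fine --- while the openness assertion of the quoted lemma should be treated with caution; fortunately it is never invoked in this paper.
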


A useful application of the preceeding ``density lemma'' emerges when
combined with the continuity of the non-singular zeros of harmonic
functions.  In the following we call $f$ sense-preserving on an open
subset $U$, if $\abs{r'(z)}
> 1$ for all $z \in U$ (similarly for sense-reversing).

\begin{lem}
\label{lem:rootmigration}
Let $f(z) = r(z) - \conj{z}$ with $\deg(r) \ge 2$.  Then for every
sufficiently
small $\eps > 0$ there exists $\delta > 0$ such such that for $\abs{c}
<
\delta$ holds: For every sense-preserving zero $z_0$ of $f$, the
perturbed
function $f - c$ has exactly one zero $z_0'$ in $\{ z : \abs{z-z_0} <
\eps \}$,
which is again sense-preserving.
The same applies to sense-reversing zeros.

In particular the function $f-c$ has at least as many sense-preserving (and
sense-reversing)
zeros as $f$.
\end{lem}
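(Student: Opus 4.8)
The plan is to combine the density Lemma \ref{lem:r-c_regular} with a local application of the argument principle (via the Poincar\'e index) and the continuity of the zero count under small perturbations. First I would fix a sense-preserving zero $z_0$ of $f$, so that $\abs{r'(z_0)} > 1$. Since the sense-preserving and sense-reversing conditions are open (they involve the strict inequalities $\abs{r'(z)} > 1$ and $\abs{r'(z)} < 1$, and $r'$ is continuous away from the poles of $r$), I can choose $\eps > 0$ small enough that the closed disk $\conj{B}(z_0, \eps)$ contains no pole of $r$, no other zero of $f$, and satisfies $\abs{r'(z)} > 1$ throughout. On this disk $f$ is genuinely sense-preserving, and by Proposition \ref{prop:regular_indices}(1) we have $\ind(z_0; f) = 1$, i.e.\ the winding $V(f; \Gamma) = 1$ on the boundary circle $\Gamma = \partial B(z_0, \eps)$.

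Next I would invoke continuity of the winding. Let $m = \min_{z \in \Gamma} \abs{f(z)} > 0$ (positive because $f$ has no zero on $\Gamma$). For any $c$ with $\abs{c} < m$ we have $\abs{(f(z) - c) - f(z)} = \abs{c} < \abs{f(z)}$ on $\Gamma$, so by a Rouch\'e-type argument for the winding the perturbed function $f - c$ has no zero on $\Gamma$ and satisfies $V(f - c; \Gamma) = V(f; \Gamma) = 1$. The total index of a harmonic function inside a region equals the winding on the boundary, so the sum of the indices of the zeros of $f - c$ inside $B(z_0, \eps)$ equals $1$; in particular $f - c$ has at least one zero there. Shrinking $\eps$ does not increase this count, so the natural expectation is exactly one zero.

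The delicate point, and what I expect to be the main obstacle, is controlling the \emph{sense} (and the exact count) of the perturbed zeros, because a priori a perturbation could turn a single sense-preserving zero into several zeros of mixed sense whose indices sum to $1$. This is precisely where the density Lemma \ref{lem:r-c_regular} enters: for generic $c$ the function $r - c$ is regular, so all zeros of $f - c = (r - c) - \conj{z}$ are non-singular, and by continuity of $r'$ any zero $z_0'$ lying in the small disk where $\abs{r'} > 1$ must itself be sense-preserving, hence of index $+1$ by Proposition \ref{prop:regular_indices}(1). Since the indices now sum to $1$ and each is $+1$, there can be exactly one such zero. To reach \emph{all} small $c$ (not merely generic ones) one combines this with the winding count $V(f - c; \Gamma) = 1$, which already guarantees a zero for every $\abs{c} < m$; the regularity argument then pins down both its uniqueness and its sense on a dense set, and the upper bound on the number of sense-preserving zeros propagates by continuity.

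Finally, I would repeat the identical argument verbatim for sense-reversing zeros, using part (2) of Proposition \ref{prop:regular_indices} so that the relevant local index is $-1$. Choosing $\delta \le m$ uniformly over the finitely many zeros $z_0$ of $f$ and taking $\eps$ small enough that the disks $B(z_0, \eps)$ are pairwise disjoint, the ``in particular'' statement follows immediately: each disk contributes at least one perturbed zero of the same sense, and disjointness prevents double-counting, so $f - c$ has at least as many sense-preserving (respectively sense-reversing) zeros as $f$.
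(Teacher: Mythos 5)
Your core approach is the same as the paper's: isolate each sense-preserving zero in a small closed disk contained in $\Omega_+ = \{z : \abs{r'(z)} > 1\}$, use Rouch\'e's theorem to get $V(f-c;\Gamma) = V(f;\Gamma) = +1$ for $\abs{c} < \min_\Gamma \abs{f}$, and then use the argument principle together with Proposition~\ref{prop:regular_indices} to convert the winding into a zero count. That is exactly the paper's proof, and up to that point your argument is fine.

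Where you go astray is in the last step. You worry that $f - c$ could have several zeros of mixed sense (or singular zeros) inside the disk, and you bring in the density Lemma~\ref{lem:r-c_regular} for generic $c$, followed by an unexplained ``propagates by continuity'' step to cover all small $c$. That detour is both unnecessary and, as written, the one non-rigorous point of your proof. The observation that dissolves the difficulty is that the perturbation $f \mapsto f - c$ does not change $\partial_z f = r'$ at all: the sense of $f - c$ at a point $z$ is still governed by $\abs{r'(z)}$, so on a disk contained in $\Omega_+$ the function $f - c$ is sense-preserving for \emph{every} $c$, not just generic ones. Hence every zero of $f-c$ in the disk has index $+1$, no singular or sense-reversing zeros can occur there, and the winding $+1$ forces exactly one zero — with no appeal to Lemma~\ref{lem:r-c_regular} and no limiting argument. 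You in fact state the key fact (``any zero lying in the small disk where $\abs{r'}>1$ must itself be sense-preserving'') but do not notice that it already applies to all $c$, which is what the paper exploits.
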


\begin{proof}
Let $\Omega_+ = \{ z : \abs{r'(z)} > 1 \}$ be the set where $f$ is
sense-preserving.  Denote the sense-preserving zeros of $f$ by $z_1,
\ldots,
z_{n_+}$.  Let $\eps > 0$ be sufficiently small such that
\begin{compactenum}
\item all disks $\{ z : \abs{z-z_j} \leq \eps \}$ are mutually
disjoint and
contained in $\Omega_+$,

\item $f$ has no zero or pole in each $\{ z : 0 < \abs{z-z_j} \leq
\eps \}$.
(This is possible, since the zeros and poles of $f$ are isolated.)
\end{compactenum}
Fix $j \in \{ 1, \ldots, n_+ \}$.  Set $\gamma_j = \{ z : \abs{z-z_j}
= \eps
\}$ and let $\delta_j = \min_{z \in \gamma_j} \abs{f(z)} > 0$.  Then,
for any $\abs{c} < \delta_j$ we have
\begin{equation*}
    \abs{ f - (f-c) }
    = \abs{c} < \delta_j
    \leq \abs{f} + \abs{f-c} \quad \text{ on } \gamma_j.
\end{equation*}
Rouch\'e's theorem shows $V(f-c; \gamma_j) = V(f; \gamma_j) = +1$, so
$f-c$
(again sense-preserving on $\Omega_+$) has exactly one
sense-preserving zero
interior to $\gamma_j$ (Proposition~\ref{prop:regular_indices}
combined with the argument
principle~\cite{DurenHengartnerLaugesen1996}).
The same applies to sense-reversing zeros (by considering the set $\Omega_-
= \{ z :
\abs{r'(z)} < 1 \}$).  Taking $\delta$ as the minimum of all
$\delta_j$
completes the proof.
\eop
\end{proof}

\opt{arxiv}{
\begin{proof}[{\bfseries Proof of Theorem~\ref{thm:new_bound}}]
}
\opt{springer}{
\begin{proof}[of Theorem~\ref{thm:new_bound}]
}
Let us denote
\begin{equation*}
    r = \tfrac{p}{q}, \quad
    p(z) = \sum_{k=0}^n p_k z^k, \quad
    q(z) = \sum_{k=0}^n q_k z^k,
\end{equation*}
and let $n_+, n_0, n_-$ be the number of sense-preserving, singular,
sense-reversing zeros of $f$, respectively.
Sometimes we make the dependence on $f$ explicit by writing $n_+(f)$ etc.

By Proposition~\ref{prop:bound_sr_neutral_zeros}, $n_{-,0} \coloneq n_- + n_0 
\leq 2(n-1)$.  It therefore remains to show that $n_+ \leq 3(n-1)$ and to show 
that $f$ has at most $5(n-1)-1$ zeros when $\deg(p) > \deg(q)$. We divide the 
proof in four steps. 

\textbf{Step 1:} Let $r$ be regular with $\deg(p) \leq \deg(q) = n$,
so the number of singular zeros is $n_0 = 0$.  Let $\gamma$ be a
circle containing all zeros and poles of $f$.  In this case, since $r$
is bounded for $z \to \infty$, we have
\begin{equation*}
    \abs{\conj{z} - f(z)} = \abs{r(z)} \le C <
    \abs{\conj{z}} + \abs{f(z)}, \quad z \in \gamma,
\end{equation*}
provided that $\gamma$ is sufficiently large.  Rouch\'e's
theorem~\cite[Theorem~2.3]{SeteLuceLiesen2014} implies $V(f; \gamma)
= V(\conj{z}; \gamma) = -1$. Applying the argument principle for complex-valued 
harmonic functions yields
\begin{equation*}
    -1
    = V(f; \gamma)
    = \sum_{z_j : f(z_j) = 0} \ind(z_j; f) +
        \sum_{ z_j : q(z_j) = 0} \ind(z_j; f)
    = n_+ - n_- - n,
\end{equation*}
where we used Proposition~\ref{prop:regular_indices}.  In particular, the sum 
of the orders of the poles of $f$ is equal to $\deg(q) = n$.  By 
Proposition~\ref{prop:bound_sr_neutral_zeros} we have $n_- \leq 2(n-1)$.  
Thus,
\begin{equation*}
    n_+ = n-1 + n_- \leq n-1 + 2(n-1) = 3(n-1).
\end{equation*}

\textbf{Step 2:} Let $\deg(p) \leq \deg(q) = n$.  If $r$ is regular,
we are done by Step~1, so assume that $r$ is not regular.  By
Lemma~\ref{lem:r-c_regular} there exists a sequence $c_k \in \C$ such
that $r_k(z) \coloneq r(z) - c_k$ are regular and $c_k \to 0$.  Then
$r_k$ satisfies the conditions of Step~1 and, setting $f_k(z) \coloneq
r_k(z) - \conj{z}$, we have $n_+(f_k) \leq 3(n-1)$ by Step~1.  For
sufficiently small $\abs{c_k}$,
Lemma~\ref{lem:rootmigration} shows that
the function $f_k = f - c_k$ has at
least as many sense-preserving zeros as $f$, that is, $n_+(f) \leq
n_+(f_k) \leq 3(n-1)$.

\textbf{Step 3:} Let $n = \deg(p) > \deg(q)$ and $p(0) \neq 0$.  In this
case we have $p_n \neq 0$, $p_0 \neq 0$ and $q_n = 0$. Let $w = T(z) =
\frac{1}{z}$, then
\begin{equation*}
R(w) = \conj{T} \circ r \circ T^{-1} (w) = \tfrac{1}{r(\frac{1}{w})}
= \frac{\sum_{k=0}^n q_k w^{n-k}}{\sum_{k=0}^n p_k w^{n-k}},
\end{equation*}
which can be seen from~\eqref{eqn:co-conjugate}.  Since $p_0 \neq 0$,
we see that $F(w) = R(w) - \conj{w}$ satisfies the conditions in
Step~2.  Thus, $n_+(F) \le 3(n - 1)$ and $n_{-,0}(F) \le 2(n-1)$.

Since $f(0) = \frac{p(0)}{q(0)} \neq 0$, every zero $z_j$ of $f$
gives rise to a zero $w_j = T(z_j)$ of $F$, and every zero $0 \neq
w_j$ of $F$ corresponds to a zero $z_j = \frac{1}{w_j}$ of $f$;
see Proposition~\ref{prop:co-conjugate}.  Since the senses of the
zeros are preserved under the co-conjugation with $T$, we find
\begin{equation*}
n_+(f) \leq n_+(F) \leq 3(n-1) \quad \text{ and } \quad n_{-,0}(f) \leq 
n_{-,0}(F) \leq 2(n-1).
\end{equation*}
Notice that $F(0) = 0$, since $q_n = 0$.  This zero of $F$ has no
corresponding zero of $f$, so that $f$ has at most $5(n-1)-1$ zeros.

\textbf{Step 4:} Let $n = \deg(p) > \deg(q)$ and $p(0) = 0$.  In that
case we have  $p_n \neq 0$, $q_n = 0$ and $p_0 = 0$.  Let $b \in \C$
satisfy $r(b) \neq \conj{b}$.  With the M\"{o}bius transformation
$T(z) = z-b$ we consider 
\begin{equation*}
    R(w)
    = \conj{T} \circ r \circ T^{-1}(w)
    = \frac{ \sum_{k=0}^n ( p_k + -\conj{b} q_k) (w+b)^k }
        { \sum_{k=0}^n q_k (w+b)^k };
\end{equation*}
see Proposition~\ref{prop:co-conjugate}.
The coefficient of $w^n$ in the numerator of $R$ is $p_n - \conj{b}
q_n = p_n \neq 0$, and in the denominator it is $q_n = 0$.  Further,
the constant term of the numerator of $R$ is
\begin{equation*}
    \sum_{k=0}^n ( p_k - \conj{b} q_k )b^k
    = p(b) - \conj{b} q(b) \neq 0,
\end{equation*}
since $r(b) \neq \conj{b}$.  Thus $F(w) \coloneq R(w) - \conj{w}$
satisfies the conditions in Step~3, so that
\begin{equation*}
n_{-,0}(F) \leq 2(n-1) \quad \text{and} \quad n_+(F) \leq 3(n-1),
\end{equation*}
and $F$ has at most $5(n-1)-1$ zeros.

Proposition~\ref{prop:co-conjugate} implies that $r(z) = 
\conj{z}$ if and only if $R(w) = \conj{w}$, where $w = T(z)$.  Thus
$f$ and $F$ have the same number of zeros, and all corresponding
zeros have the same sense (or are singular).  Hence $n_{-,0}(f) =
n_{-,0}(F) \leq 2(n-1)$ and $n_+(f) = n_+(F) \leq 3(n-1)$, and the
total number of zeros of $f$ is bounded by $5(n-1)-1$.
\eop
\end{proof}

\begin{rem}
Let $f(z) = \frac{p(z)}{q(z)} - \conj{z}$ with $\deg(p) > \deg(q)$, so
that $f$ has at most $5(n-1) - 1$ zeros.  Then the point $\infty$ in
$\hat{\C}$ can be regarded as the ``missing solution'' to $r(z) =
\conj{z}$.  However, the point infinity can \emph{not} be a zero of
the function $r(z) - \conj{z}$,
see~\cite[p.~1078]{KhavinsonNeumann2006}.
\end{rem}

\begin{rem}
In Step~3 in the above proof, one can infer the type of the zero $w =
0$ of $F$. In this step $p_n \neq 0$ and $q_n = 0$.
We compute
\begin{equation*}
    R'(w)
    = \tfrac{ - r'(\frac{1}{w}) \frac{-1}{w^2} }{ r(\frac{1}{w})^2 }
    = r'(z) \tfrac{ z^2 }{ r(z)^2 }
    = \tfrac{ ( p'(z) q(z) - p(z) q'(z) ) z^2 }{p(z)^2}.
\end{equation*}
Note that $z^{2n}$ is the highest power of $z$ that may occur in both numerator 
and denominator.  The coefficient of $z^{2n}$ in the denominator is $p_n^2$, and
in the numerator it is
\begin{equation*}
n p_n q_{n-1} - p_n q_{n-1} (n-1) = p_n q_{n-1},
\end{equation*}
which yields
\begin{equation*}
R'(0) = \lim_{w \to 0} R'(w)
=\lim_{z \to \infty} \tfrac{ ( p'(z) q(z) - p(z) q'(z) ) z^2 }{p(z)^2}
= \tfrac{ q_{n-1} }{p_n}.
\end{equation*}
This shows that $w = 0$ may be a sense-preserving, sense-reversing or
singular zero of $F$.
\end{rem}

\begin{rem}
\label{rem:difference}

Let us briefly discuss how our proof of Theorem~\ref{thm:new_bound}
differs from the original proof of Khavinson \& Neumann
in~\cite{KhavinsonNeumann2006}. A major ingredient in both proofs is
Proposition~\ref{prop:bound_sr_neutral_zeros}, due to Khavinson \&
Neumann, which bounds the number of sense-reversing and singular
zeros. Because of this result it only remains to bound the number of
sense-preserving zeros.  Here the two main technical challenges are
(i) dealing with singular zeros, and (ii) the slightly different
behavior of rational functions $f(z) = \frac{p(z)}{q(z)} - \conj{z}$
with $\deg(p) \le \deg(q)$ and $\deg(p)
> \deg(q)$.
The main difference between the two proofs is the order in which (i)
and (ii) are handled.  While Khavinson \& Neumann first resolve (ii)
under the assumption that all zeros are regular, and then apply the
density lemma (Lemma~\ref{lem:r-c_regular}) to resolve (i), our proof
first treats the case $\deg(p) \le \deg(q)$, using the density lemma
(steps $1$ and $2$), and then transfers the result to the other case
using Proposition~\ref{prop:co-conjugate} (steps $3$ and $4$).  By
this order we avoid a transformation of variables
which may introduce singular zeros at a stage of the proof where this
case is not covered.
\end{rem}

The bounds in Theorem~\ref{thm:new_bound} are sharp.  If $f(z) = r(z)
- \conj{z}$ of degree $n \geq 2$ attains the maximal number of
  $5(n-1)$ zeros, we call $f$ and $r$ \emph{extremal}.  Examples of
extremal functions were constructed by Rhie~\cite{Rhie2003}.  She
considered the function
\begin{equation}
f(z) = \tfrac{z^{n-1}}{z^n - a^n} - \conj{z} 
\label{eqn:mpw_function}
\end{equation}
which is extremal for degree $n = 2, 3$ for a special value of $a \in (0,1)$, 
and the function
\begin{equation}
f(z) = (1-\eps) \tfrac{z^{n-1}}{z^n - a^n} + \tfrac{\eps}{z} - \conj{z}
= \tfrac{z^n - \eps a^n}{ z^{n+1} - a^n z }, 
\label{eqn:rhie_function}
\end{equation}
of degree $n+1$ which is extremal for $n \geq 3$, provided that $\eps$
is sufficiently small.  See~\cite{LuceSeteLiesen2014} for a rigorous
analysis of admissible parameters $a$ and $\eps$ such that these
functions are indeed extremal.  Note that the rational function
in~\eqref{eqn:rhie_function} is a convex combination of the rational
function in~\eqref{eqn:mpw_function} and a pole located at a zero
of~\eqref{eqn:mpw_function}.  This general construction principle for
extremal functions has been studied in detail
in~\cite{SeteLuceLiesen2014}.

A phase portrait (see~\cite{WegertSemmler2011,Wegert2012}
and~\cite[Section 4]{SeteLuceLiesen2014}) of an extremal function of
the form~\eqref{eqn:rhie_function} with $n = 4$, and $\eps = 0.04$ is
shown in Figure~\ref{fig:bounds_sharp} (left).

\begin{figure}
\includegraphics[width=.49\textwidth]{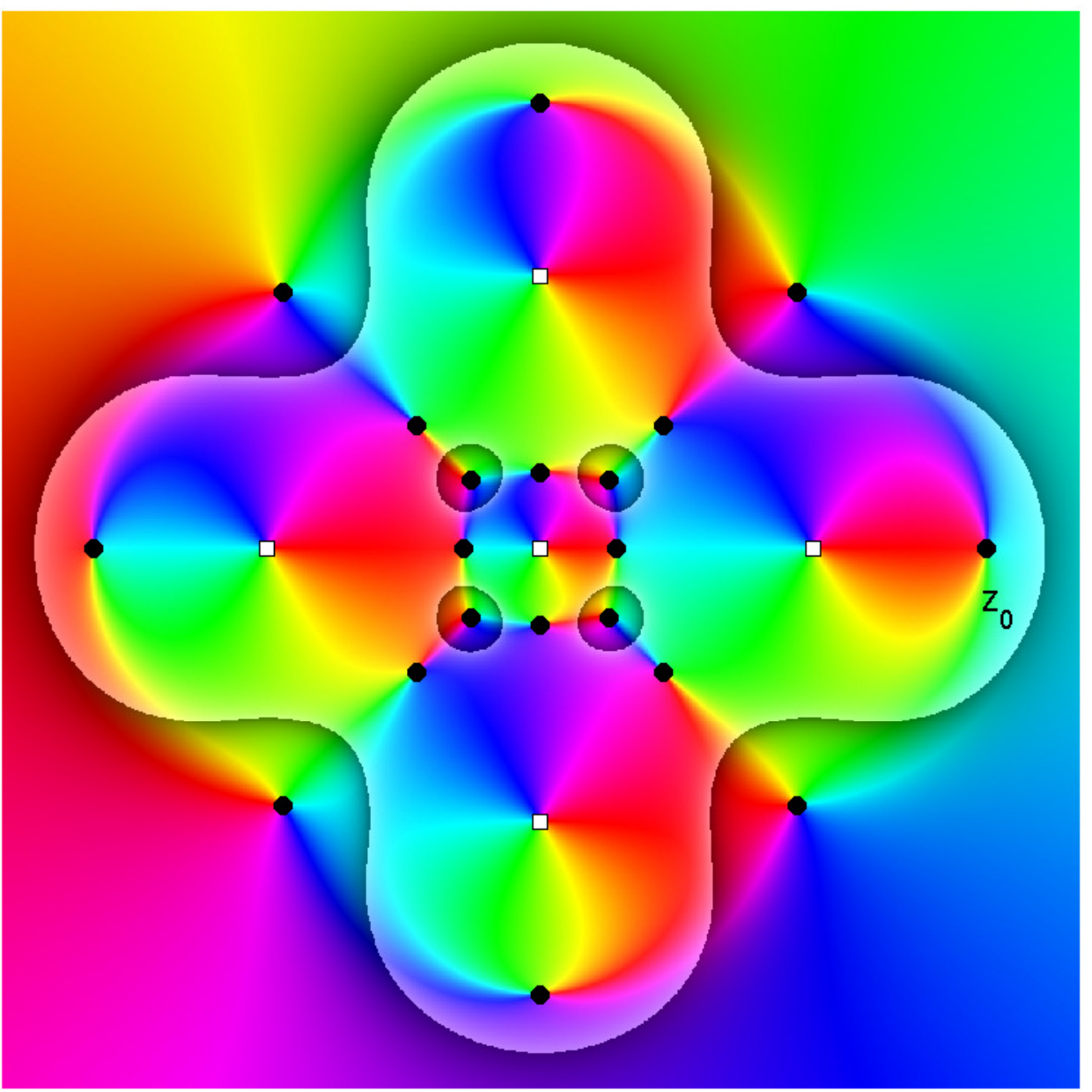} \hfill
\includegraphics[width=.49\textwidth]{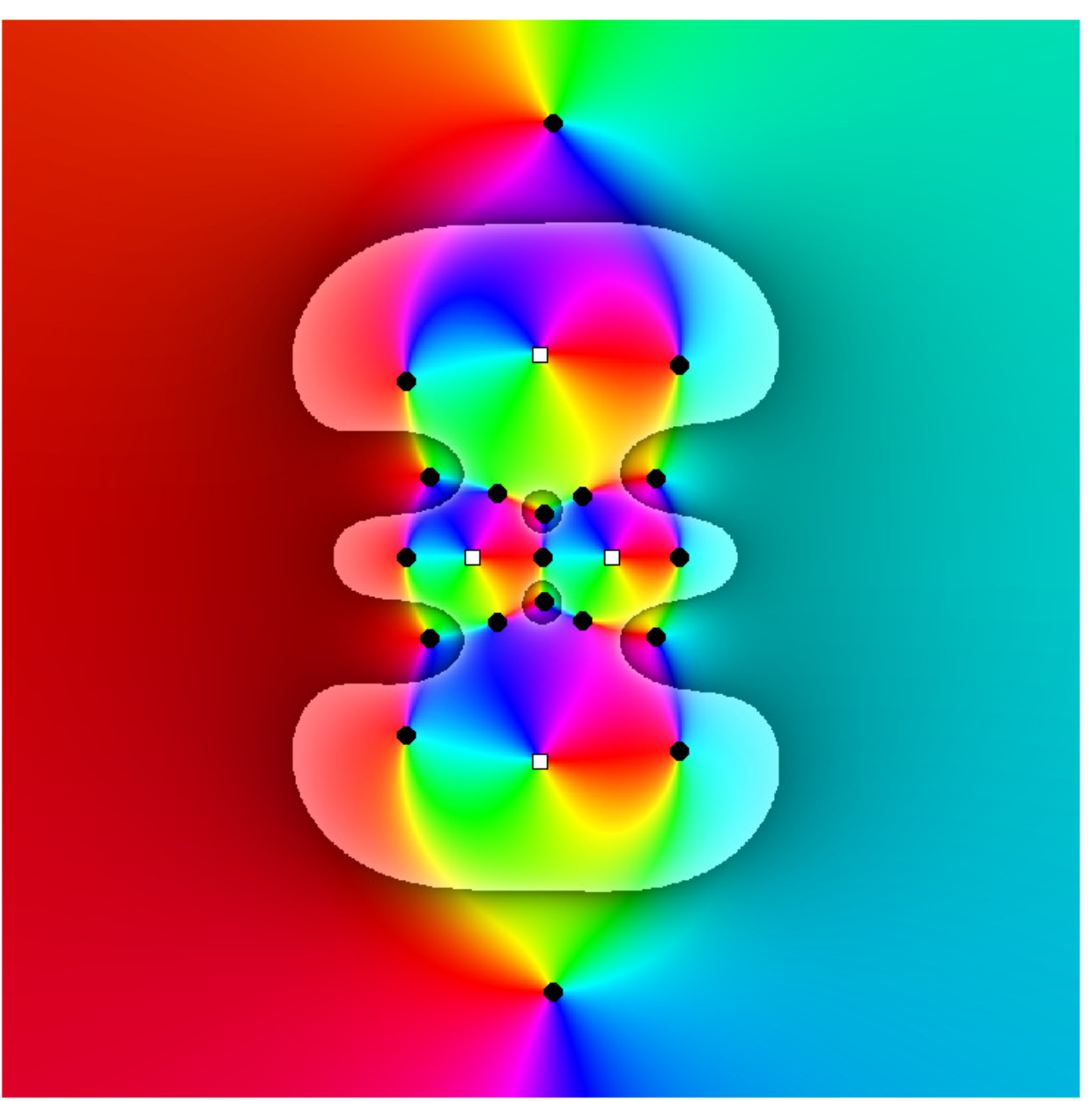}
\caption{Phase portrais of~\eqref{eqn:rhie_function} (left), and
of~\eqref{eqn:co-conj_rhie} (right).  Each image shows a part of
the domain of the corresponding function.  By indentifying the unit
circle with the standard HSV color wheel, each point in the domain is
colored according to the phase $e^{i \arg{f(z)}}$ of the function
value at that point (see~\cite{Wegert2012}).  The brightened regions
indicate the parts of the domain where the function is
sense-preserving.  Black disks denote zeros, white squares poles.
Both functions are of degree five, and have $20$ and $19$ zeros,
respectively,  which is the maximum possible number in each case.}
\label{fig:bounds_sharp}
\end{figure}

We show that for rational functions with $\deg(p) > \deg(q)$ the new bound from 
Theorem~\ref{thm:new_bound} is also sharp.  Let $r$ be the any of the extremal 
rational functions from~\eqref{eqn:mpw_function} and~\eqref{eqn:rhie_function}. 
Let $z_0$ be any zero of $f(z) = r(z) - \conj{z}$ and consider the co-conjugate 
of $r$ with $w = T(z) = \tfrac{1}{z-z_0}$:
\begin{equation*}
R(w) = \conj{T} \circ r \circ T^{-1}(w) = \tfrac{1}{ r(\frac{1}{w} + z_0) - 
\conj{z}_0 }. 
\end{equation*}
From Proposition~\ref{prop:co-conjugate} it is easy to see that the numerator 
of $R$ has degree $\deg(r)$ and the denominator has degree (at most) 
$\deg(r)-1$.  Further the zeros of
\begin{equation}
F(w) = R(w) - \conj{w} \label{eqn:co-conj_rhie}
\end{equation}
are exactly the images of the zeros ($\neq z_0$) of $f(z) = r(z) - \conj{z}$, 
so that $F$ has $5(\deg(R)-1)-1$ zeros.  Figure~\ref{fig:bounds_sharp} (right) 
illustrates this construction for $n = 4$, where $z_0$ is the
rightmost zero of $f$ in the left phase portrait.

\section{Extremal Rational Harmonic Functions are Regular}
\label{sec:extremal}

In this section we will show that functions $f(z) = r(z) - \conj{z}$
that attain the maximum number of zeros as stated in
Theorem~\ref{thm:new_bound} are, surprisingly, guaranteed
to be regular.

\begin{thm}
Let $r = \frac{p}{q}$ be a rational function of degree $n \ge 2$ and set $f(z) =
r(z) - \conj{z}$. If
\begin{compactenum}[(i)]
\item $f$ has $5(n-1)$ zeros, or\label{item:good_case}
\item $\deg(p) > \deg(q)$ and $f$ has $5(n-1) - 1$ zeros,\label{item:bad_case}
\end{compactenum}
then none of the zeros are singular.
\end{thm}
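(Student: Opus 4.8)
The plan is to argue by contradiction: suppose $f=r-\conj z$ satisfies (i) or (ii) but has a singular zero $z_0$, and derive that $f$ must then have strictly fewer zeros than claimed. The key tool is the density Lemma~\ref{lem:r-c_regular} together with the root-migration Lemma~\ref{lem:rootmigration}. The idea is that a singular zero is an unstable configuration: under a small perturbation $f-c$ of a regularized problem, the singular zero either splits or disappears, and we want to show it cannot contribute the full index weight that a regular zero contributes. More precisely, I would pick a sequence $c_k\to 0$ with $r-c_k$ regular (possible by Lemma~\ref{lem:r-c_regular}), set $f_k=f-c_k$, and compare the zero counts of $f_k$ and $f$.

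First I would handle case (i), where $\deg(p)\le\deg(q)=n$ may be assumed (the case $\deg(p)>\deg(q)$ can be reduced to this via a co-conjugation as in Steps~3 and~4 of the proof of Theorem~\ref{thm:new_bound}, using Proposition~\ref{prop:co-conjugate} to transport the singular zero). For the regular functions $f_k$ there are no singular zeros, so $n_0(f_k)=0$ and by Step~1 of the previous proof we have the exact balance $n_+(f_k)-n_-(f_k)=n-1$. Lemma~\ref{lem:rootmigration} guarantees that, for small $\abs{c_k}$, near every sense-preserving zero of $f$ there sits exactly one sense-preserving zero of $f_k$, and likewise for sense-reversing zeros, so $n_+(f_k)\ge n_+(f)$ and $n_-(f_k)\ge n_-(f)$. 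The crucial observation is that a singular zero $z_0$ of $f$ is \emph{not} covered by this migration argument, yet $f_k$ must still have total zero count bounded by $5(n-1)$. I would then argue that if $f$ attains $5(n-1)=n_++n_-+n_0$ zeros with $n_0\ge 1$, the combination of the strict bound $n_-+n_0\le 2(n-1)$ (Proposition~\ref{prop:bound_sr_neutral_zeros}) applied to $f$ and the exact count for the regularized $f_k$ forces a contradiction; concretely, $n_+(f)\le 3(n-1)$ and $n_-(f)+n_0(f)\le 2(n-1)$ already give $\le 5(n-1)$ total, and equality in the sum requires equality in \emph{both}, so $n_0\ge1$ would have to come at the expense of $n_-$, keeping $n_-+n_0=2(n-1)$; the contradiction must come from showing the extra singular zero cannot be sustained once we pass to $f_k$.

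The main obstacle is exactly pinning down how a singular zero behaves under perturbation, since Lemma~\ref{lem:rootmigration} is stated only for non-singular zeros. The hard part will be showing that the disk around a singular zero $z_0$ of $f$ contributes, in the regularized $f_k$, a total Poincaré index (equivalently, a net count of sense-preserving minus sense-reversing zeros) that is \emph{strictly larger} in absolute value than the index $f$ assigns there, or else produces extra zeros that push $f_k$ past the bound $5(n-1)$. The clean way to capture this is to apply Rouché's theorem on a small circle $\gamma_0$ around $z_0$: $V(f_k;\gamma_0)=V(f;\gamma_0)=\ind(z_0;f)$ for small $\abs{c_k}$, and then use the fact that inside $\gamma_0$ the regular function $f_k$ has only $\pm1$-index zeros, so the number of such zeros is at least $\abs{\ind(z_0;f)}+(\text{even correction})$. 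If one shows that a singular zero has $\ind(z_0;f)\in\{0,\pm1\}$ but that its neighbourhood nevertheless spawns at least two zeros of $f_k$ (one sense-preserving and one sense-reversing, or similar), then $f_k$ acquires strictly more zeros than $f$ retains through migration, contradicting $n_+(f_k)+n_-(f_k)\le 5(n-1)$ when $f$ already had $5(n-1)$ zeros. I would therefore spend most of the effort on a local analysis of $f$ near a singular zero — where $\abs{r'(z_0)}=1$ — to verify that the regularization genuinely creates a strict surplus of zeros, which is the linchpin of the whole argument.
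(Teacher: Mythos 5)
Your overall strategy (perturb $f$ and show that a singular zero forces the perturbed function to exceed the bound of Theorem~\ref{thm:new_bound}) is the right one, and it is close in spirit to the paper's argument. But the step you yourself identify as the linchpin --- that the neighbourhood of a singular zero $z_0$ must ``spawn'' a strict surplus of zeros of $f_k=f-c_k$ for a regularizing sequence $c_k\to 0$ --- is not just unproven, it is false for a generic choice of $c_k$. A singular zero can have Poincar\'e index $0$ (a fold-type degeneracy), and then $V(f_k;\gamma_0)=V(f;\gamma_0)=0$ is perfectly consistent with $f_k$ having \emph{no} zeros inside $\gamma_0$: perturbing in one direction creates a sense-preserving/sense-reversing pair, perturbing in the other direction annihilates the zero entirely. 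The density lemma gives you no control over which direction $c_k$ points, so the local analysis you propose cannot yield the strict surplus you need, and the contradiction never materializes.

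The paper's proof repairs exactly this by \emph{choosing} the perturbation constant rather than taking a generic one, and in fact it does not use the density Lemma~\ref{lem:r-c_regular} at all. One first shows, via the maximum modulus principle applied to $r'$, that every neighbourhood of a singular zero $z_0$ meets the sense-preserving region $\Omega_+=\{z:\abs{r'(z)}>1\}$ (otherwise $\abs{r'}\le 1$ nearby with $\abs{r'(z_0)}=1$ would force $r'$ to be constant, contradicting $\deg(r)\ge 2$). Picking $\zeta\in\Omega_+$ close enough to $z_0$ that $\abs{f(\zeta)}$ is smaller than the threshold $\delta$ from Lemma~\ref{lem:rootmigration}, one sets $c=f(\zeta)$: then $f-c$ still has one sense-preserving zero in each of the $3(n-1)$ disks around the original sense-preserving zeros, \emph{and} it vanishes at $\zeta\in\Omega_+$, which lies outside all of those disks. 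That is $3(n-1)+1$ sense-preserving zeros, contradicting Theorem~\ref{thm:new_bound} directly --- no analysis of how the singular zero itself deforms is needed. For case (ii) the paper also goes in the opposite direction from your sketch: instead of transporting the singular zero into the $\deg(p)\le\deg(q)$ setting, it co-conjugates with $T(z)=\frac{1}{z-b}$ so that $F=R-\conj{w}$ picks up the additional zero $F(0)=0$, reaches the full count $5(n-1)$, and case (i) applies. You should restructure your argument around a chosen perturbation value $c=f(\zeta)$ with $\zeta\in\Omega_+$ near $z_0$; the generic-perturbation route cannot be completed.
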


\begin{proof}
\textit{(\ref{item:good_case})} Let $\Omega_+ \coloneq \{ z : \abs{r'(z)} > 1 \}$ be the set where $f$ is 
sense-preserving.  Denote by $n_+$ the number of zeros of $f$ in $\Omega_+$
and by $n_{-,0}$ the number of zeros of $f$ in $\{ z : \abs{r'(z)} \leq 1 \}$.
Since $f$ has $5(n-1)$ zeros, Theorem~\ref{thm:new_bound} implies
\begin{equation*}
n_+ = 3(n-1), \quad n_{-,0} = 2(n-1).
\end{equation*}

Suppose $f$ has a singular zero $z_0$.  
Let $z_1, \ldots, z_{n_+}$ be the $n_+ = 3(n-1)$ zeros of $f$ in $\Omega_+$.
Let $\eps > 0$ be such that the disks 
$\{ z : \abs{z-z_j} \leq \eps \}$ do not intersect for $0 \leq j \leq n_+$, and 
are contained in $\Omega_+$ for $1 \leq j \leq n_+$.
By Lemma~\ref{lem:rootmigration} applied to $f$ on $\Omega_+$ there exists 
$\delta > 0$ such that for all $\abs{c} < \delta$ the function $f - c$ 
has exactly one zero in each $\eps$-disk $D_\eps(z_j) = \{ z : \abs{z-z_j} < 
\eps \}$, $1 \leq j \leq n_+$.

Now, since $f(z_0) = 0$ and $f$ is continuous near $z_0$, there exists
$0 < \eta \leq \eps$ such that $\abs{f(z)} < \delta$ in $D_\eta(z_0) = \{ z :
\abs{z-z_0} < \eta \}$.
Further, there exists $\zeta \in D_\eta(z_0) \cap \Omega_+$.  Indeed, assume 
the contrary, then $\abs{r'(z)} \leq 1$ in $D_\eta(z_0)$ and $\abs{r'(z_0)} = 1$,
which implies that $r'$ is constant by the maximum modulus theorem, a
contradiction to $\deg(r) \geq 2$.

Finally, consider the function $F(z) \coloneq f(z) - f(\zeta)$.  Since
$\abs{f(\zeta)} < \delta$, $F$ has exactly one zero in each disk 
$D_\eps(z_j)$, $1 \leq j \leq n_+$, and further $F(\zeta) = 0$.  Thus $F$ has
$n_+ + 1 = 3(n-1) + 1$ distinct sense-preserving zeros in $\Omega_+$, in 
contradiction to Theorem~\ref{thm:new_bound}.  Therefore $f$ has no singular 
zeros.

\textit{(\ref{item:bad_case})} We reduce this case to the previous
one.  Let $b \in \C$ such that $r(b) \neq \conj{b}$, and define the
M\"obius transformation $w = T(z) = \frac{1}{z - b}$.  Consider the
co-conjugate $R = \conj{T} \circ r \circ T^{-1}$ and $F(w) = R(w) -
\conj{w}$.  By Proposition~\ref{prop:co-conjugate}, all $5(n-1) - 1$
zeros of $f$ transform to zeros of $F$ with $\abs{r'(z)} =
\abs{R'(w)}$, so that the senses are preserved.  Note that none of the
zeros of $f$ is mapped to $0$ under $T$. However,
\eqref{eqn:co-conjugate} and $n = \deg(p) > \deg(q)$ imply $R(0) =
\frac{q_n}{p_n} = 0$, so that we have $F(0) = 0$.  Thus $F$ has a
total number of $5(n-1)$ zeros, none of which is singular by the first
part.  Hence none of the zeros of $f$ are singular.  \eop
\end{proof}

\paragraph*{Ackowledgements}

We would like to thank Dmitry Khavinson for pointing out
reference~\cite{AnEvans2006} to us.  We are grateful to the
anonymous referee for the careful reading of the manuscript, and
many valuable remarks.

Robert Luce's work is supported by Deutsche For\-schungs\-gemeinschaft,
Cluster of Excellence ``UniCat''.

\bibliographystyle{plain}
\bibliography{note_kn}

\end{document}